\newtheorem{theorem}{Theorem}[section]
\newtheorem{prop}[theorem]{Proposition}
\newtheorem{lem}[theorem]{Lemma}
\newtheorem{coro}[theorem]{Corollary}
\newtheorem{thm}[theorem]{Theorem}
\newtheorem{rem}[theorem]{\rm\textsc{Remark}}
\newtheorem{exam}[theorem]{\rm\textsc{Example}}
\newcommand{\ideal}[1]{\ensuremath{\left\langle #1 \right\rangle}}
\newcommand{\bslash}{\kern-0.1em\texttt{\scalebox{0.6}[1]{/}}\kern-0.15em \texttt{\scalebox{0.6}[1]{/}}}
\DeclareMathOperator{\cent}{Cent}
\DeclareMathOperator{\gder}{GDer}
\DeclareMathOperator{\qder}{QDer}
\DeclareMathOperator{\zder}{ZDer}
\DeclareMathOperator{\der}{Der}
\DeclareMathOperator{\qcent}{QCent}
\newcommand{\A}{\mathcal{A}} 
\newcommand{\B}{\mathcal{B}} 
\newcommand{\C}{\mathbb{C}}
\newcommand{\g}{\mathfrak{g}} 
\newcommand{\gl}{\mathfrak{gl}}
\newcommand{\ra}{\longrightarrow}
\newcommand{\wt}{\widetilde}
\newcommand{\hbo}{$\hfill\Diamond$} 
\begin{document}
\title{Generalized derivations of $\upomega$-Lie algebras} 
\def\shorttitle{Generalized derivations of $\upomega$-Lie algebras}

\author{Yin Chen}
\address{School of Mathematics and Physics, Jinggangshan University,
Ji'an 343009, Jiangxi, China \& Department of Finance and Management Science, University of Saskatchewan, Saskatoon, SK, Canada, S7N 5A7}
\email{yin.chen@usask.ca}

\author{Shan Ren}
\address{School of Mathematics and Statistics, Northeast Normal University, Changchun 130024, China}
\email{rens734@nenu.edu.cn}

\author{Jiawen Shan}
\address{School of Mathematics and Systems Science, Shenyang Normal University, Shenyang 110034, China}
\email{ShanJiawen826@outlook.com}

\author{Runxuan Zhang}
\address{Department of Mathematics and Information Technology, Concordia University of Edmonton, Edmonton, AB, Canada, T5B 4E4}
\email{runxuan.zhang@concordia.ab.ca}

\begin{abstract}
This article explores the structure theory of compatible generalized derivations of finite-dimensional $\upomega$-Lie algebras over a field $\mathbb{K}$. We prove that any compatible quasiderivation of an $\upomega$-Lie algebra
can be embedded  as a compatible derivation into a larger $\upomega$-Lie algebra, refining the general result established by Leger and Luks in 2000 for finite-dimensional nonassociative algebras. We also provide an approach to explicitly compute (compatible) generalized derivations and quasiderivations for all $3$-dimensional non-Lie complex $\upomega$-Lie algebras.
\end{abstract}

\date{\today}
\thanks{2020 \emph{Mathematics Subject Classification}. 17B40.}
\keywords{Generalized derivations; quasiderivations; $\upomega$-Lie algebras}
\maketitle \baselineskip=16.2pt

\dottedcontents{section}[1.16cm]{}{1.8em}{5pt}
\dottedcontents{subsection}[2.00cm]{}{2.7em}{5pt}

\section{Introduction}
\setcounter{equation}{0}
\renewcommand{\theequation}
{1.\arabic{equation}}
\setcounter{theorem}{0}
\renewcommand{\thetheorem}
{1.\arabic{theorem}}

\noindent The study of finite-dimensional nonassociative algebras over a field $\mathbb{K}$ attached with a bilinear form $\upomega$ is a new trend in many topics related to Lie theory; see \cite{CNY23b, CW23, ZCMS18} and \cite{ZC21}. As a natural generalization of Lie algebras, the algebraic theory of $\upomega$-Lie algebras was initiated by \cite{Nur07} and originally aims to understand the geometry of isoparametric hypersufaces in Riemannian geometry. In the past 10 years, algebraic structures of finite-dimensional complex  $\upomega$-Lie algebras have been studied extensively and many classical results in Lie algebras were generalized to the cases of $\upomega$-Lie algebras; see for example, \cite[Corollary 1.3]{Zha21} for an $\upomega$-version of Lie's theorem.  Among these  studies, derivation theory plays an essential and irreplaceable role in understanding structures, representations, extensions, and automorphism groups of $\upomega$-Lie algebras; see \cite{CZZZ18} and \cite{Oub24}.

A systematic generalization on classical derivation theory of Lie algebras  dates back to \cite{LL00} which introduced the concept of generalized derivations for any finite-dimensional nonassociative algebras and thoroughly studied the tower formed by derivation algebras, generalized derivations, and the entire space of all linear maps. This tower framework and the philosophy of exploring the coincidence of two terms have inspired numerous subsequent research; see  \cite{CCZ21, CRSZ24} and \cite{CZ23}. The primary objective of this article is to develop a theory of generalized derivations for finite-dimensional $\upomega$-Lie algebras, approaching it from both the algebraic structure and computational aspects.

Let us recall some fundamental concepts and facts on $\upomega$-Lie algebras. Let $\mathbb{K}$ be a field of characteristic zero, $L$ a finite-dimensional vector space over $\mathbb{K}$, and 
$\upomega$ be a skew-symmetric bilinear form on $L$. We say that $L$ is an \textit{$\upomega$-Lie algebra} over $\mathbb{K}$ if there exists a skew-symmetric bilinear bracket $[-,-]:L\times L\ra L$ such that the following \textit{$\upomega$-Jacobi identity} holds:
\begin{equation}\tag{$\upomega$-Jacobi identity}\label{Jacobi}
[[x,y],z]+[[y,z],x]+[[z,x],y]=\upomega(x,y)\cdot z+\upomega(y,z)\cdot x+\upomega(z,x)\cdot y
\end{equation}
for all $x,y,z\in L$. Apparently, the class of Lie algebras over $\mathbb{K}$ coincides with the class of 
$\upomega$-Lie algebras with $\upomega=0$. All non-Lie complex $\upomega$-Lie algebras of dimension at most $5$ have been classified by \cite{CLZ14, CZ17} and \cite{CNY23a}. These low-dimensional examples are  indispensable because they usually provide valuable insights into the behavior of the class of $\upomega$-Lie algebras. For instance, finite-dimensional non-Lie simple complex $\upomega$-Lie algebras must be $3$-dimensional; see \cite[Theorem 7.1]{CZ17}. 

A linear map $f$ on a finite-dimensional $\upomega$-Lie algebra $L$ is called a \textit{generalized derivation} of $L$ provided that there exist two linear maps $f_1,f_2$ on $L$ such that
\begin{equation}
\label{gder}
[f(x),y]=f_2([x,y])-[x,f_1(y)]
\end{equation}
for all $x,y\in L$. Clearly, a (classical) derivation $f$ of $L$ is a special case of generalized derivations for which $f_1=f_2=f$ in (\ref{gder}). We write $\gder(L)$ for the set of all generalized derivations of $L$ and it is nonempty because it contains the derivation algebra $\der(L)$ of $L$. In particular,  $\gder(L)$ is also a Lie subalgebra of $\gl(L)$; see \cite[Lemma 3.1 (i)]{LL00}. A linear map $f$ on $L$ is \textit{compatible} if 
\begin{equation} \label{comp}
\upomega(f(x),y)+\upomega(x,f(y))=0.
\end{equation}
for all $x,y\in L$. In \cite[Proposition 2.4]{CZZZ18}, we have already seen that the subset $\der_c(L)$ of $\der(L)$ consisting of all compatible derivations of $L$ is a vector space. Moreover, there exists a close connection between $\der_c(L)$ and the automorphism group of $L$; see \cite[Sections 3 and 4]{CZZZ18}.

This naturally motivates our interest in understanding the structure of compatible generalized derivations of $L$.
Write $\gder_c(L)$ for the set of all compatible generalized derivations on $L$. We will see below that $\gder_c(L)$ is a vector space. Then we have the following tower of vector spaces:
\begin{equation}\label{tower}
\der_c(L)\subseteq \gder_c(L)\subseteq \gder(L)\subseteq \gl(L).
\end{equation}  
As the first  topic of this article, exploring the difference between two terms in the tower (\ref{tower}) above is entirely reasonable, because it clearly inherits the philosophy occurred in \cite{LL00}. 
To better understand such differences, we also need to consider the space $\qder_c(L)$ of compatible quasiderivations of $L$; see Section \ref{sec2} below. Note that methods and viewpoints in  \cite{LL00} have influenced many recent research; for example, \cite{Vla22, BPS24}, and \cite{Vla24}.

A classical result in the theory of generalized derivations of Lie algebras  states that 
any quasiderivation of a Lie algebra can be embedded as a derivation into a larger Lie algebra; see \cite[Section 3]{LL00}. This result have been extended to many cases of generalizations of Lie algebras such as Lie superalgebras \cite{ZZ10} and Hom-Lie triple systems \cite{ZCM18}. Our second topic is to generalize and to refine this classical result on compatible quasiderivations to the case of $\upomega$-Lie algebras and to explore the difference between the image of all compatible quasiderivations of an $\upomega$-Lie algebra and the compatible derivation algebra of the larger $\upomega$-Lie algebra.  

The third topic of this article aims to develop an approach to explicitly calculate compatible generalized derivations of low-dimensional complex $\upomega$-Lie algebras. Literally speaking, this approach comes from commutative algebra. In fact, many methods from commutative algebras and computational algebraic geometry have been applied recently to the study of many nonassociative algebras; see for example, \cite{Che25, CZ24a,CZ24b} and \cite{RZ24}. Note that our computations will be based on the classifications appeared in \cite{CLZ14} and \cite{CZ17}.

We organize this article as follows.  Section \ref{sec2} contains a fundamental exploration on generalized derivations of $\upomega$-Lie algebras. We prove two key lemmas on compatible linear maps and associated maps $f'$ to a quasiderivation $f$; see Lemmas \ref{keyl} and \ref{lem2}, respectively. We also obtain a result on decompositions of compatible generalized derivations (Proposition \ref{prop2.5}) and present a specific $3$-dimensional example to explicitly illustrate the difference between these generalized derivations; see Example \ref{exam2.6}.
In Section \ref{sec3}, we refine the result in \cite[Proposition 3.5]{LL00}, demonstrating that
every compatible quasiderivation of an $\upomega$-Lie algebra $L$ can be embedded as a compatible derivation into a larger $\upomega$-Lie algebra $\wt{L}$. Moreover, we derive a decomposition theorem on $\der_c(\wt{L})$; see Theorem \ref{mainthm}. Section \ref{sec4} is devoted to explicit computations on 
generalized derivations, compatible generalized derivations, quasiderivations, and compatible quasiderivations of all $3$-dimensional non-Lie complex $\upomega$-Lie algebras.

Throughout this article, $\mathbb{K}$ denotes a field of characteristic zero, $\C$ denotes the field of complex field,  and all algebras are finite-dimensional over $\mathbb{K}$ or $\C$.

\section{Generalized Derivations} \label{sec2}
\setcounter{equation}{0}
\renewcommand{\theequation}
{2.\arabic{equation}}
\setcounter{theorem}{0}
\renewcommand{\thetheorem}
{2.\arabic{theorem}}

\noindent  This section contains some fundamental notions and facts on generalized derivations of  $\upomega$-Lie algebras. Some statements might have appeared in \cite[Section 3]{LL00} but there were no detailed proofs provided.  Here we will modify and refine these statements for $\upomega$-Lie algebras and present detailed proofs.

Let $\mathbb{K}$ be a field of characteristic zero and  $(L,[-,-],\upomega)$ be a finite-dimensional $\upomega$-Lie algebra over $\mathbb{K}$. We write $\gl(L)$ for the general linear Lie algebra on $L$.

\begin{lem}\label{keyl}
Let $a\in \mathbb{K}$ be a scalar and $f,g\in \gl(L)$ be any two compatible linear maps. Then $f+g, a\cdot f,$ and $[f,g]$  are also compatible.
\end{lem}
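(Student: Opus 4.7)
The plan is to verify the compatibility identity (\ref{comp}) directly in each of the three cases, leveraging only bilinearity of $\upomega$ and the hypothesized compatibility of $f$ and $g$. Skew-symmetry of $\upomega$ will not be needed.

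For $f+g$ and $a\cdot f$, the argument is immediate from the bilinearity of $\upomega$. Expanding $\upomega((f+g)(x),y)+\upomega(x,(f+g)(y))$ into four terms and regrouping gives the sum of the compatibility expressions for $f$ and for $g$, each of which vanishes; pulling the scalar $a$ out of $\upomega(af(x),y)+\upomega(x,af(y))$ reduces it to $a$ times the compatibility identity for $f$. I would dispense with both of these in a single display.

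The substantive case is $[f,g]=fg-gf$. My plan here is to push each composition across $\upomega$ in two steps. Starting from $\upomega(fg(x),y)$, applying compatibility of $f$ to the pair $(g(x),y)$ yields $-\upomega(g(x),f(y))$, and then applying compatibility of $g$ to the pair $(x,f(y))$ yields $\upomega(x,gf(y))$. A symmetric two-step computation gives $\upomega(gf(x),y)=\upomega(x,fg(y))$. Substituting these two identities into
\[
\upomega([f,g](x),y)+\upomega(x,[f,g](y))=\upomega(fg(x),y)-\upomega(gf(x),y)+\upomega(x,fg(y))-\upomega(x,gf(y))
\]
produces a telescoping cancellation to $0$, completing the proof.

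The only real obstacle is sign bookkeeping, which stays under control because each application of compatibility contributes exactly one sign flip and the compositions $fg$ and $gf$ appear in symmetric positions.
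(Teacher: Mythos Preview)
Your proposal is correct and follows essentially the same approach as the paper: both verify (\ref{comp}) directly from bilinearity for $f+g$ and $a\cdot f$, and for $[f,g]$ both expand and repeatedly apply the compatibility of $f$ and $g$ to obtain a cancellation. The only cosmetic difference is that the paper applies compatibility once to each of $\upomega([f,g](x),y)$ and $\upomega(x,[f,g](y))$ separately, arriving at mixed terms $\pm\upomega(f(x),g(y))\mp\upomega(g(x),f(y))$ that cancel upon addition, whereas you push each term through two applications to land on $\upomega(x,gf(y))$ and $\upomega(x,fg(y))$; the substance is identical.
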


\begin{proof}
Since $\upomega$ is bilinear, it is easy to see that both $f+g$ and $a\cdot f$ are compatible. To see that $[f,g]$ is compatible, note that $[f,g]=fg-gf$ in $\gl(L)$, thus for all $x,y\in L$, we have
\begin{eqnarray*}
\upomega([f,g](x),y) & = & \upomega(fg(x)-gf(x),y) = \upomega(fg(x),y)-\upomega(gf(x),y)\\
 &=&-\upomega(g(x),f(y))+\upomega(f(x),g(y)).
\end{eqnarray*}
Similarly, $\upomega(x,[f,g](y))=-\upomega(f(x),g(y))+\upomega(g(x),f(y)).$ Hence,
$$\upomega([f,g](x),y)+\upomega(x,[f,g](y))=0.$$
Namely, $[f,g]$ is also compatible.
\end{proof}

Recall that a generalized derivation $f$ of $L$ is called a \textit{quasiderivation} of $L$ if $f_1=f$ in (\ref{gder}). In other words,  there exists a linear map $f'$ on $L$ such that
\begin{equation}
\label{qder}
[f(x),y]+[x,f(y)]=f'([x,y])
\end{equation}
for all $x,y\in L$. 
By \cite[Lemma 3.1]{LL00}, we see that the set $\qder(L)$ of all quasiderivations of $L$ is nonempty and it is actually a Lie subalgebra of $\gder(L)$. We write $\qder_c(L)$ for the set of all compatible quasiderivations of $L$ and clearly, we have
\begin{equation}\label{tower2}
\der_c(L)\subseteq \qder_c(L)\subseteq \gder_c(L)\subseteq \gder(L).
\end{equation}  

We denote by $\qcent_c(L)$ the \textit{compatible quasicentroid} of $L$, which is defined as the set of consisting of all compatible linear maps $f$ on $L$ such that 
\begin{equation} \label{qcent}
[f(x),y]=[x,f(y)]
\end{equation}
for all $x,y\in L$. Furthermore, we write $\cent_c(L)$ for the  \textit{compatible centroid} of $L$, defined as the set consisting of all compatible linear maps $f$ on $L$ satisfying
\begin{equation}
\label{cent}
[f(x),y]=[x,f(y)]=f([x,y])
\end{equation}
for all $x,y\in L$. 

\begin{prop}\label{prop2.2}
The set $\gder_c(L)$ is a Lie subalgebra of $\gder(L)$ and $\qder_c(L)$
is also a Lie subalgebra of $\gder_c(L)$. 
\end{prop}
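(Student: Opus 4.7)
The plan is to reduce both claims to the facts (i) that $\gder(L)$ and $\qder(L)$ are already Lie subalgebras of $\gl(L)$, cited from \cite[Lemma 3.1]{LL00}, and (ii) that compatibility is preserved under linear combinations and commutator brackets, which is exactly Lemma \ref{keyl}. Since $\gder_c(L) = \gder(L) \cap \mathcal{C}$ and $\qder_c(L) = \qder(L) \cap \mathcal{C}$, where $\mathcal{C}\subseteq \gl(L)$ is the set of compatible maps, both statements will follow from showing that $\mathcal{C}$ itself is a Lie subalgebra of $\gl(L)$ and intersecting with the relevant Lie subalgebras.

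First I would verify that $\mathcal{C}$ is a Lie subalgebra of $\gl(L)$. Closure under addition, scalar multiplication, and commutator is precisely the content of Lemma \ref{keyl}. Next I would check that $\gder_c(L)$ is closed under the three operations. For linear combinations, if $f,g\in\gder_c(L)$ have witnesses $(f_1,f_2)$ and $(g_1,g_2)$ as in \eqref{gder}, then for any $a\in\mathbb{K}$ one easily sees that $f+g$ and $a\cdot f$ are generalized derivations with witnesses $(f_1+g_1,f_2+g_2)$ and $(a f_1,a f_2)$, respectively, by adding the defining identities; Lemma \ref{keyl} then supplies compatibility of the result. For the bracket, \cite[Lemma 3.1(i)]{LL00} already puts $[f,g]$ into $\gder(L)$, and Lemma \ref{keyl} puts $[f,g]$ into $\mathcal{C}$, so $[f,g]\in\gder_c(L)$.

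The argument for $\qder_c(L)\subseteq \gder_c(L)$ runs in parallel: if $f,g\in\qder_c(L)$ have witnesses $f',g'$ as in \eqref{qder}, then $f+g$ and $a\cdot f$ are quasiderivations with witnesses $f'+g'$ and $a\cdot f'$, and Lemma \ref{keyl} preserves compatibility; for the bracket, $[f,g]\in\qder(L)$ by the corresponding part of \cite[Lemma 3.1]{LL00}, while compatibility is again handed to us by Lemma \ref{keyl}. Combining with the containment $\qder(L)\subseteq \gder(L)$ we obtain $\qder_c(L)\subseteq \gder_c(L)$ as a Lie subalgebra.

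The main potential pitfall would be if the $\upomega$-Jacobi identity interfered with the usual proof that $[f,g]$ is a generalized (or quasi-)derivation, but this is not an issue because that step is borrowed wholesale from \cite[Lemma 3.1]{LL00}, which holds for arbitrary nonassociative algebras; no property specific to the classical Jacobi identity is used there. So the proof is essentially a bookkeeping exercise, with Lemma \ref{keyl} carrying the only substantive content particular to the $\upomega$-setting.
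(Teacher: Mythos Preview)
Your proposal is correct and follows essentially the same approach as the paper: both arguments cite \cite[Lemma 3.1]{LL00} to place $f+g$, $a\cdot f$, and $[f,g]$ inside $\gder(L)$ (respectively $\qder(L)$), and then invoke Lemma \ref{keyl} to guarantee compatibility. Your packaging via the intersection $\gder(L)\cap\mathcal{C}$ with the Lie subalgebra $\mathcal{C}$ of compatible maps is a slightly cleaner way to phrase it, but the content is identical.
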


\begin{proof}
Clearly, $\gder_c(L)$ is a nonempty subset of $\gder(L)$ as it contains the zero map.  Since $\gder(L)$ is a Lie subalgebra of $\gl(L)$, it follows that $f+g,a\cdot f$, and $[f,g]\in \gder(L)$
for all  $a\in \mathbb{K}$ and all $f,g\in \gder_c(L)$. By Lemma \ref{keyl}, 
$f+g,a\cdot f$, and $[f,g]$ are compatible generalized derivations of $L$. Hence, $\gder_c(L)$ is a Lie subalgebra of $\gder(L)$. 

The second statement also can be obtained by combining Lemma \ref{keyl} and the fact in \cite[Lemma 3.1 (i)]{LL00}, which states that  $\qder(L)$ is a Lie subalgebra of $\gder(L)$.
\end{proof}

In \cite{LL00}, the authors also use the pair $(f,f')$ to denote a quasiderivation of a nonassociative algebra, where $f'$ is a linear map associated with $f$ in (\ref{qder}).  We will see in Proposition \ref{prop3.3} below that
determining such $f'$ plays a key role in studying the embedding question of quasiderivations.

\begin{lem}\label{lem2}
Let $c\in k$ and $f,g\in\qder(L)$. Then 
\begin{enumerate}
  \item $(c\cdot f)'=c\cdot f'$ and $(f+g)'=f'+g'$.
  \item $[f,g]'=[f',g']$.
\end{enumerate}
\end{lem}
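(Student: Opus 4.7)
The plan is to verify both parts by a direct computation from the defining identity (\ref{qder}), exactly as one does for Lie algebras. Note that (\ref{qder}) determines $f'$ uniquely only on the derived subspace $[L,L]$, so throughout these identities should be read as asserting that the displayed right-hand side is a valid choice of associated map (equivalently, as an equality of linear maps on $[L,L]$). Since (\ref{qder}) does not involve $\upomega$, no appeal to the $\upomega$-Jacobi identity is needed.

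For part (1), both claims follow immediately from bilinearity of $[-,-]$. Writing
$$[(cf)(x),y]+[x,(cf)(y)] = c\bigl([f(x),y]+[x,f(y)]\bigr) = c\cdot f'([x,y])$$
shows that $c\cdot f'$ serves as an associated map for $cf$, and an analogous calculation yields $f'+g'$ as an associated map for $f+g$.

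Part (2) is the real content. I would expand $[[f,g](x),y]+[x,[f,g](y)]$ via $[f,g]=fg-gf$ into the four terms $[f(g(x)),y]$, $[g(f(x)),y]$, $[x,f(g(y))]$, $[x,g(f(y))]$, and rewrite each using (\ref{qder}) applied to $f$ or $g$: for example,
\begin{align*}
[f(g(x)),y] &= f'([g(x),y]) - [g(x),f(y)], \\
[x,f(g(y))] &= f'([x,g(y)]) - [f(x),g(y)],
\end{align*}
together with the two analogous identities obtained by swapping $f\leftrightarrow g$. Substituting these four expressions and collecting terms, the cross brackets $[g(x),f(y)]$ and $[f(x),g(y)]$ each appear with opposite signs and cancel, leaving
$$f'\bigl([g(x),y]+[x,g(y)]\bigr) \;-\; g'\bigl([f(x),y]+[x,f(y)]\bigr).$$
A second application of (\ref{qder}), this time to the two bracketed sums inside, replaces the parentheses with $g'([x,y])$ and $f'([x,y])$ respectively, yielding $f'g'([x,y])-g'f'([x,y])=[f',g']([x,y])$, which is exactly the claim.

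The only real obstacle is the careful bookkeeping of the eight terms produced in the expansion and tracking that the cross-terms cancel in pairs rather than combining; conceptually the proof is routine once one organizes the substitutions symmetrically in $f$ and $g$.
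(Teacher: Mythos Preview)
Your proof is correct and follows essentially the same approach as the paper: a direct computation from (\ref{qder}) for part (1), and for part (2) an expansion of $[[f,g](x),y]+[x,[f,g](y)]$ into four terms, rewritten via (\ref{qder}) so that the cross brackets $[f(x),g(y)]$ and $[g(x),f(y)]$ cancel, followed by a second application of (\ref{qder}) to obtain $[f',g']([x,y])$. Your explicit caveat that the identities are only meaningful on $[L,L]$ is a welcome clarification that the paper leaves implicit.
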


\begin{proof} Suppose that $x,y\in L$ denote two arbitrary elements. 

(1) As $[f(x),y]+[x,f(y)]=f'([x,y])$, it follows that $[c\cdot f(x),y]+[x,c\cdot f(y)]=c([f(x),y]+[x,f(y)])=c(f'([x,y]))=
(c\cdot f')[x,y]$. Thus $(c\cdot f)'=c\cdot f'$. Similarly,
\begin{eqnarray*}
[(f+g)(x),y]+[x,(f+g)(y)] & = & [f(x),y]+[g(x),y]+[x,f(y)]+[x,g(y)] \\
 &=& [f(x),y]+[x,f(y)]+[g(x),y]+[x,g(y)] \\
 & = & f'([x,y])+g'([x,y])=(f'+g')([x,y]).
\end{eqnarray*}
This means that $(f+g)'=f'+g'$.

(2) To prove $[f,g]'=[f',g']$, we note that $f,g\in\qder(L)\subseteq\gl(L)$ and $[f,g]=fg-gf$. Moreover, 
\begin{eqnarray*}
[f,g]'([x,y])& = & [[f,g](x),y] + [x,[f,g](y)] \\
 & = & [fg(x),y]-[gf(x),y]+[x,fg(y)]-[x,gf(y)]\\
 &=& [fg(x),y]+[g(x),f(y)]-[g(x),f(y)]-[x,gf(y)]+\\
 &&[x,fg(y)]+[f(x),g(y)]-[f(x),g(y)]-[gf(x),y]\\
 &=&f'([g(x),y])-g'([x,f(y)])+f'([x,g(y)])-g'([f(x),y])\\
 &=&f'\Big([g(x),y]+[x,g(y)]\Big)-g'\Big([x,f(y)]+[f(x),y]\Big)\\
 &=& f'(g'([x,y]))-g'(f'([x,y]))=[f',g']([x,y]).
\end{eqnarray*}
Therefore, $[f,g]'=[f',g']$.
\end{proof}
 
\begin{rem}{\rm
The following statements hold immediately from the last four statements in \cite[Lemma 3.1]{LL00} and Lemma \ref{keyl} above.
\begin{enumerate}
  \item $[\der_c(L),\cent_c(L)]\subseteq \cent_c(L)$.
  \item $[\qder_c(L),\qcent_c(L)]\subseteq\qcent_c(L)$.
  \item $\cent_c(L)\subseteq\qcent_c(L)$.
  \item $[\qcent_c(L),\qcent_c(L)]\subseteq \qder_c(L)$.
\end{enumerate}
One may obtain detailed proofs by applying a similar argument in Proposition \ref{prop2.2}.
\hbo}\end{rem}

\begin{prop}\label{prop2.5}
If $\qder_c(L)=\qder(L)$ or $\qcent_c(L)=\qcent(L)$, then
$$\gder_c(L)=\qder_c(L)+\qcent_c(L).$$
\end{prop}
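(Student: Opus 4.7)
The plan is to mimic the Leger--Luks decomposition from \cite[Proposition 3.3]{LL00} --- which establishes $\gder(L) = \qder(L) + \qcent(L)$ in the ambient, non-compatible setting --- and then upgrade it to the compatible subspaces using Lemma \ref{keyl}. The reverse inclusion $\qder_c(L) + \qcent_c(L) \subseteq \gder_c(L)$ is immediate: any compatible quasiderivation $f$ lies in $\gder_c(L)$ via the triple $(f, f', f)$, and any $h \in \qcent_c(L)$ lies in $\gder_c(L)$ via the triple $(h, 0, -h)$.

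For the forward inclusion, given $f \in \gder_c(L)$ with witnesses $f_1, f_2$ as in \eqref{gder}, I would set
$$g := \tfrac{1}{2}(f + f_1), \qquad h := \tfrac{1}{2}(f - f_1),$$
so that $f = g + h$. Using skew-symmetry of the bracket to swap $x, y$ in the identity $[f(x),y] + [x, f_1(y)] = f_2([x,y])$, one obtains the companion identity $[f_1(x), y] + [x, f(y)] = f_2([x,y])$. Adding and subtracting these gives $[g(x), y] + [x, g(y)] = f_2([x,y])$ and $[h(x), y] = [x, h(y)]$, so that $g \in \qder(L)$ (with $g' = f_2$) and $h \in \qcent(L)$. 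Only the skew-symmetry of $[-, -]$ enters here; the $\upomega$-Jacobi identity plays no role.

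The hypothesis of the proposition is then used to push the decomposition into the compatible subspaces. If $\qder_c(L) = \qder(L)$, then $g$ is compatible automatically; since $f$ is compatible by assumption, Lemma \ref{keyl} forces $h = f - g$ to be compatible too, so $h \in \qcent_c(L)$. Symmetrically, if $\qcent_c(L) = \qcent(L)$, then $h$ is compatible, and Lemma \ref{keyl} yields $g = f - h \in \qder_c(L)$. Either way, $f = g + h \in \qder_c(L) + \qcent_c(L)$, as required.

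The main subtlety --- rather than a genuine obstacle --- is that the canonical decomposition $f = g + h$ need not preserve compatibility on its own: the summands $g, h$ are built from $f_1$, which a priori need not be compatible, so we cannot conclude that $g$ or $h$ is individually compatible from compatibility of $f$ alone. The hypothesis of the proposition is exactly what supplies compatibility for one summand inside its own class, after which Lemma \ref{keyl} transports compatibility to the other summand. I expect this compatibility bookkeeping to be the only non-routine point in the argument.
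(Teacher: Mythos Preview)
Your proposal is correct and matches the paper's proof essentially verbatim: the same decomposition $f=\tfrac12(f+f_1)+\tfrac12(f-f_1)$ into a quasiderivation plus a quasicentroid element, followed by the same use of the hypothesis together with Lemma~\ref{keyl} to propagate compatibility from one summand to the other. The only cosmetic slip is the ordering of your witness triples in the reverse inclusion (a quasiderivation corresponds to $(f,f_1,f_2)=(f,f,f')$, and a quasicentroid element to $(h,-h,0)$), but this does not affect the argument.
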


\begin{proof}
We prove this result by showing the mutual inclusion of both sides.
To prove $\qder_c(L)+\qcent_c(L)\subseteq \gder_c(L)$, we recall \cite[Proposition 3.3 (1)]{LL00}, which asserts that $\qder(L)+\qcent(L)=\gder(L)$. Thus $\qder_c(L)+\qcent_c(L)\subseteq \gder(L)$. Now, applying 
Lemma \ref{keyl} obtains $\qder_c(L)+\qcent_c(L)\subseteq \gder_c(L)$. 

To prove the inverse inclusion, we take an arbitrary $f\in \gder_c(L)$. Then there exist two linear maps $f_1,f_2$ on $L$ such that $[f(x),y]=f_2([x,y])-[x,f_1(y)]$ for all $x,y\in L$. By the anti-commutativity of $\upomega$-Lie algebras, we see that $[f_1(y),x]=-f_2([x,y])+[f(x),y]=f_2([y,x])-[y,f(x)]$. Thus, $f_1$ is also a generalized derivation. We define
$$u:=\frac{f+f_1}{2}\textrm{ and }v:=\frac{f-f_1}{2}.$$
Clearly, $f=u+v$. Furthermore, $u\in \qder(L)$ because
\begin{eqnarray*}
[u(x),y]+[x,u(v)] & = &\left[\frac{f(x)+f_1(x)}{2},y\right]+ \left[x,\frac{f(y)+f_1(y)}{2}\right] \\
 & = & \frac{1}{2}\left([f(x),y]+[x,f_1(y)]\right)+\frac{1}{2}\left([x,f(y)]+[f_1(x),y]\right)\\
 &=&\frac{1}{2}\cdot f_2([x,y])+\frac{1}{2}\cdot f_2([x,y])=f_2([x,y]).
\end{eqnarray*}
To see that $v\in \qcent(L)$, we note that 
$$[v(x),y]-[x,v(y)]=\left[\frac{f(x)-f_1(x)}{2},y\right]-\left[x,\frac{f(y)-f_1(y)}{2}\right]=
\frac{1}{2}\cdot f_2([x,y])-\frac{1}{2}\cdot f_2([x,y])=0.$$
As $\qder_c(L)=\qder(L)$ or $\qcent_c(L)=\qcent(L)$, 
it follows that either $u$ is compatible or $v$ is compatible. By Lemma \ref{keyl}, we see that $u$ and $v$ both are compatible. Hence, $f\in \qder_c(L)+\qcent_c(L)$, as desired.
\end{proof}

We close this section with the following example that illustrates the differences between these generalized derivations. 

\begin{exam}\label{exam2.6}
{\rm
Recall that the $3$-dimensional non-Lie complex $\upomega$-Lie algebra $L_1$ in \cite[Theorem 2]{CLZ14}, which can be spanned by $\{x,y,z\}$ subject to the following  generating relations:
$$[x,y]=y,[x,z]=0,[y,z]=z\textrm{ and }\upomega(x,y)=1, \upomega(x,z)=\upomega(y,z)=0.$$

(1) Let's first compute the space $\gder(L_1)$. Suppose that $$f=\begin{pmatrix}
     x_{11} & x_{12} &x_{13}   \\
     x_{21} & x_{22} &x_{23}   \\
     x_{31} & x_{32} &x_{33}   \\
\end{pmatrix}, f_1=\begin{pmatrix}
     a_{11} & a_{12} &a_{13}   \\
     a_{21} & a_{22} &a_{23}   \\
     a_{31} & a_{32} &a_{33}   \\
\end{pmatrix},\textrm{and }f_2=\begin{pmatrix}
     b_{11} & b_{12} &b_{13}   \\
     b_{21} & b_{22} &b_{23}   \\
     b_{31} & b_{32} &b_{33}   \\
\end{pmatrix}$$
where the action of $f$ on $L_1$ is given by
\begin{eqnarray*}
f(x)&=&x_{11}\cdot x+x_{21}\cdot y+x_{31}\cdot z\\
f(y)&=&x_{12}\cdot x+x_{22}\cdot y+x_{32}\cdot z\\
f(z)&=&x_{13}\cdot x+x_{23}\cdot y+x_{33}\cdot z.
\end{eqnarray*}
The actions of $f_1$ and $f_2$ on $L_1$ are defined in the similar way. Substituting these actions into
(\ref{gder}) and verifying  (\ref{gder}) for $(x,y),(y,x), (x,z),(z,x)$ and $(y,z),(z,y)$ respectively, we obtain
\begin{equation} \label{eq2.5}
\begin{aligned}
&x_{21}=x_{23}=0\\
&x_{11}+a_{22}-b_{22}=x_{13} + b_{23}=x_{22} + a_{33} - b_{33}=x_{31} +b_{32}=x_{33} + a_{22} - b_{33}=    0
\\
&a_{11} - a_{33} - b_{22} + b_{33}=a_{13} + b_{23}=a_{21}=a_{23}= a_{31} + b_{32}=b_{12}=b_{13}=0.
\end{aligned}
\end{equation}

Note that $x_{11}, x_{13}, x_{22},x_{31}$ and $x_{33}$ can be expressed by the set $\{a_{ij},b_{ij}\mid 1\leqslant i,j\leqslant 3\}$ of free variables, thus they also can be viewed as free variables. There are two equations $x_{21}=0=x_{23}$
only involving $x_{ij}$, so they indicate that
$$\dim(\gder(L_1))=7.$$
More precisely, a generic map $$f=\begin{pmatrix}
     x_{11} & x_{12} &x_{13}   \\
     0 & x_{22} &0   \\
     x_{31} & x_{32} &x_{33}   \\
\end{pmatrix},$$
together with
$$f_1=\begin{pmatrix}
     a_{33} + b_{22} - b_{33} & a_{12} &-b_{23}   \\
     0& b_{22}-x_{11} &0   \\
     -b_{32} & a_{32} &a_{33}   \\
\end{pmatrix}\textrm{and }f_2=\begin{pmatrix}
     b_{11} & 0 &0  \\
     b_{21} & b_{22} &b_{23}   \\
     b_{31} & b_{32} &b_{33}   \\
\end{pmatrix}$$
form a generalized derivation of $L_1$. Therefore,
$$\gder(L_1)\neq \gl(L_1)$$
because $\dim(\gl(L_1))=9.$

(2) Secondly, we may compute $\gder_c(L_1)$ by combining (\ref{comp}) and (\ref{gder}) for the pairs of basis elements: $(x,y), (x,z),$ and $(y,z)$. In fact, applying (\ref{comp}) for $(x,y), (x,z),$ and $(y,z)$ obtains
$$x_{22}+x_{11}=x_{23}=x_{21}=x_{13}=0.$$
Hence, an element $f\in \gder_c(L_1)$ must be of  the following form
$$f=\begin{pmatrix}
     x_{11} & x_{12} &0   \\
     0 & -x_{11} &0   \\
     x_{31} & x_{32} &x_{33}   \\
\end{pmatrix}.$$
This also shows that $\gder_c(L_1)$ is a $5$-dimensional vector space, and thus
$$\gder_c(L_1)\neq \gder(L_1).$$

(3) To compute $\qder(L_1)$, as $f_1=f$,  we only need to replace each $a_{ij}$ with $x_{ij}$ for all
$1\leqslant i,j\leqslant 3$ and consider the equations in (\ref{eq2.5}). The generic form of an element $f\in \qder(L_1)$ is also
$$f=\begin{pmatrix}
     x_{11} & x_{12} &x_{13}   \\
     0 & x_{22} &0   \\
     x_{31} & x_{32} &x_{33}   \\
\end{pmatrix}.$$
Hence, $\qder(L_1)=\gder(L_1)$. 
\hbo}\end{exam}

\section{Embedding of Compatible Quasiderivations}\label{sec3}
\setcounter{equation}{0}
\renewcommand{\theequation}
{3.\arabic{equation}}
\setcounter{theorem}{0}
\renewcommand{\thetheorem}
{3.\arabic{theorem}}

\noindent In this section, we study the embedding question of compatible quasiderivations of 
$\upomega$-Lie algebras. A classical result appeared in \cite[Proposition 3.5]{LL00} states that
each quasiderivation of a nonassociative algebra $A$ with zero annihilator can be embedded as a derivation into a larger algebra $\wt{A}$. This section considers the enhancing question of whether 
every compatible quasiderivation of an $\upomega$-Lie algebra $L$ can be embedded as a compatible derivation into a larger $\upomega$-Lie algebra $\wt{L}$.

Let $L$ be an $n$-dimensional $\upomega$-Lie algebra over a field $\mathbb{K}$. Suppose that $k[t]$ denotes the polynomial ring in one variable $t$ over $\mathbb{K}$ and define $\wt{L}:=L\left[t\cdot k[t]/\ideal{t^3}\right]$, representing the $\mathbb{K}$-vector space of dimension $2n$ with the basis
$$\left\{x_i\cdot t^j\mid 1\leqslant i\leqslant n, 1\leqslant j\leqslant 2\right\}$$
where $x_1,\dots,x_n$ form a basis of $L$. Extending the bracket product in $L$ to $\wt{L}$ by setting
\begin{equation}
\label{ }
[x_i\cdot t, x_s\cdot t]:=[x_i,x_s]\cdot t^2,~~\textrm{ otherwise }[x_i\cdot t^j, x_s\cdot t^r]:=0
\end{equation}
and extending the bilinear form $\upomega$ on $L$ to a new bilinear form $\wt{\upomega}$ by:
\begin{equation}
\label{ }
\wt{\upomega}(x_i\cdot t, x_s\cdot t):=\upomega(x_i,x_s), ~~\textrm{ otherwise }
\wt{\upomega}\left(x_i\cdot t^j, x_s\cdot t^r\right):=0.
\end{equation}

Consider the subspace $[L,L]$ of $L$ and  the complementary space $U$ of $[L,L]$ in $L$. Then $L$ has the following decomposition of vector spaces:
\begin{equation}
\label{ }
\wt{L}=L\cdot t+L\cdot t^2=L\cdot t+(U\oplus [L,L])\cdot t^2=L\cdot t+[L,L]\cdot t^2+U\cdot t^2.
\end{equation}
We define a map $\updelta_U: \qder(L)\ra \der(\wt{L})$ that sends a quasiderivation $f$ (associated with a linear map $f'$) of $L$ to the linear map $\updelta_U(f): \wt{L}\ra\wt{L}$ given by
$$a\cdot t+b\cdot t^2+u\cdot t^2\mapsto f(a)\cdot t+f'(b)\cdot t^2$$
where $a\in L, b\in[L,L]$, and $u\in U$. 

\begin{rem}{\rm
Note that $\updelta_U(f)$ is well-defined, i.e., it is independent of the choice of $f'$. In fact, assume that
$f''$ is another linear map such that $f''([x,y])=[f(x),y]+[x,f(y)]$ for all $x,y\in L$. Then
$f''([x,y])=f'([x,y])$ for all $x,y\in L$. Hence,
$$f(a)\cdot t+f'(b)\cdot t^2=f(a)\cdot t+f''(b)\cdot t^2$$
for all $a\in L, b\in[L,L]$. Therefore, the image of an element $a\cdot t+b\cdot t^2+u\cdot t^2\in\wt{L}$ under
$\updelta_U(f)$ is unique. 
\hbo}\end{rem}

\begin{lem}\label{lem3.2}
The linear map $\updelta_U(f)$ is a derivation of $\wt{L}$, for all $f\in \qder(L)$. Moreover, $\updelta_U(f)$ is compatible if and only if $f$ is compatible. 
\end{lem}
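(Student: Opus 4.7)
The plan is to reduce both assertions to the only nontrivial component of $\wt{L}$, namely $L \cdot t$, since by construction both the extended bracket and the extended form $\wt{\upomega}$ vanish on any pair that does not lie entirely in $L \cdot t \times L \cdot t$. Because $\updelta_U(f)$ is linear, it suffices to verify the derivation identity and the compatibility identity on a generic pair of decomposed elements.

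First I would verify the Leibniz rule for $\updelta_U(f)$. Take arbitrary $X = a \cdot t + b \cdot t^2 + u \cdot t^2$ and $Y = a' \cdot t + b' \cdot t^2 + u' \cdot t^2$ in $\wt{L}$ with $a, a' \in L$, $b, b' \in [L,L]$, and $u, u' \in U$. By the definition of the extended bracket, $[X, Y] = [a, a'] \cdot t^2$, and since $[a, a'] \in [L, L]$ we obtain $\updelta_U(f)([X, Y]) = f'([a, a']) \cdot t^2$. On the other side, the $t^2$-components of $\updelta_U(f)(X)$ and $\updelta_U(f)(Y)$ bracket trivially with every element of $\wt{L}$, so $[\updelta_U(f)(X), Y] + [X, \updelta_U(f)(Y)]$ reduces to $\bigl([f(a), a'] + [a, f(a')]\bigr) \cdot t^2$. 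Equality of the two sides is then exactly the defining identity (\ref{qder}) of a quasiderivation, which proves $\updelta_U(f) \in \der(\wt{L})$.

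Second, for the compatibility equivalence I would evaluate
$$\wt{\upomega}(\updelta_U(f)(X), Y) + \wt{\upomega}(X, \updelta_U(f)(Y))$$
on the same $X, Y$. Because $\wt{\upomega}$ is supported only on $L \cdot t \times L \cdot t$, this sum collapses to $\upomega(f(a), a') + \upomega(a, f(a'))$. The forward direction, that $f$ compatible implies $\updelta_U(f)$ compatible, is then immediate; the converse follows by specializing $X = a \cdot t$ and $Y = a' \cdot t$ with $a, a'$ ranging over $L$.

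The proof is essentially a direct calculation and no major obstacle is anticipated. The only subtlety worth flagging is that both computations implicitly use the preceding Remark, which guarantees that $\updelta_U(f)$ is independent of the choice of associated map $f'$; without that observation the expression $f'([a, a'])$ appearing when one writes $[X, Y] = [a, a'] \cdot t^2 \in [L,L] \cdot t^2$ would not be well-defined, and the equality with $[f(a), a'] + [a, f(a')]$ would be vacuous.
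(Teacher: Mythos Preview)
Your proposal is correct and follows essentially the same approach as the paper: both take generic decomposed elements of $\wt{L}$, observe that the bracket and $\wt{\upomega}$ collapse to the $L\cdot t$ components, and reduce the derivation identity and the compatibility identity to the quasiderivation condition (\ref{qder}) and the $\upomega$-compatibility condition (\ref{comp}) on $L$, respectively. The only minor remark is that your closing comment slightly overstates the role of the preceding Remark: for any fixed choice of $f'$ the equality $f'([a,a'])=[f(a),a']+[a,f(a')]$ holds by definition, so the Leibniz computation is valid regardless; the Remark is needed only to ensure that $\updelta_U(f)$ itself is unambiguously defined as a map, not to make the equality nonvacuous.
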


\begin{proof}
Let us first prove that $\updelta_U(f)$ is a derivation of $\wt{L}$.
We write $(a;b,u)$ for the element $a\cdot t+b\cdot t^2+u\cdot t^2\in\wt{L}=L\cdot t+[L,L]\cdot t^2+U\cdot t^2$, where $a\in L, b\in[L,L]$, and $u\in U$. Suppose that $(a';b',u')$ denotes another element in $\wt{L}$. Then the bracket product of $\wt{L}$ shows that
$$[(a;b,u),(a';b',u')]=(0;[a,a'],0).$$ 
Thus, $\updelta_U(f)([(a;b,u),(a';b',u')])=\updelta_U(f)(0;[a,a'],0)=(0;f'([a,a']),0)$, as $f'([a,a'])\in [L,L]$. 
On the other hand, 
$[\updelta_U(f)(a;b,u),(a';b',u')]=[(f(a);f'(b),0),(a';b',u')]=(0;[f(a),a'],0)$ and 
$[(a;b,u),\updelta_U(f)(a';b',u')]=(0;[a,f(a')],0)$. Since $[f(a),a']+[a,f(a')]=f'([a,a'])$, it follows that
$\updelta_U(f)([(a;b,u),(a';b',u')])=[\updelta_U(f)(a;b,u),(a';b',u')]+[(a;b,u),\updelta_U(f)(a';b',u')]$,
which means that $\updelta_U(f)$   is a derivation of $\wt{L}$. 

Moreover, to explore the connection between compatibilities of $\updelta_U(f)$ and $f$, we observe that
\begin{eqnarray*}
&&\wt{\upomega}(\updelta_U(f)(a;b,u),(a';b',u'))+\wt{\upomega}((a;b,u),\updelta_U(f)(a';b',u'))\\
 & = & \wt{\upomega}((f(a);f'(b),0),(a';b',u')) + \wt{\upomega}((a;b,u),(f(a');f'(b'),0)) \\
 & = & \upomega(f(a),a')+\upomega(a,f(a'))
\end{eqnarray*}
for all $(a;b,u),(a';b',u')\in\wt{L}$. Therefore, $\updelta_U(f)$ is compatible if and only if $f$ is compatible.
\end{proof}

\begin{prop}\label{prop3.3}
The map $\updelta_U$ is an injective Lie homomorphism.
\end{prop}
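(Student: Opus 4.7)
The plan is to split the verification into three pieces: linearity of $\updelta_U$ as a map of vector spaces, preservation of the Lie bracket, and injectivity. Lemma \ref{lem3.2} has already shown $\updelta_U(f) \in \der(\wt{L})$, so the target is correct. Linearity is the easy piece: writing $v = a\cdot t + b\cdot t^2 + u\cdot t^2$ with $a\in L$, $b\in [L,L]$, $u\in U$, one sees that $\updelta_U(cf+g)(v) = (cf+g)(a)\cdot t + (cf+g)'(b)\cdot t^2$, which by Lemma \ref{lem2}(1) equals $(c\updelta_U(f)+\updelta_U(g))(v)$.

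Bracket preservation is the substantive step and I expect it to be the main obstacle. I would compute $[\updelta_U(f),\updelta_U(g)](v)$ and $\updelta_U([f,g])(v)$ separately and match them. The direct side gives, via Lemma \ref{lem2}(2),
\[
\updelta_U([f,g])(v) = [f,g](a)\cdot t + [f,g]'(b)\cdot t^2 = (fg-gf)(a)\cdot t + [f',g'](b)\cdot t^2.
\]
For the composite side, applying $\updelta_U(g)$ to $v$ yields $g(a)\cdot t + g'(b)\cdot t^2$, and one then needs to re-apply $\updelta_U(f)$. The subtlety is that before doing so, one must re-express $g'(b)\cdot t^2$ in the decomposition $[L,L]\cdot t^2 \oplus U\cdot t^2$. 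The key observation that makes the computation collapse is that $f'$ and $g'$ preserve $[L,L]$: indeed, for any finite sum $\sum_i [x_i,y_i] \in [L,L]$,
\[
f'\Bigl(\sum_i [x_i,y_i]\Bigr) = \sum_i\bigl([f(x_i),y_i] + [x_i,f(y_i)]\bigr) \in [L,L].
\]
Therefore $g'(b) \in [L,L]$ has zero $U$-component, so $\updelta_U(f)(g(a)\cdot t + g'(b)\cdot t^2) = fg(a)\cdot t + f'g'(b)\cdot t^2$. A symmetric computation gives $\updelta_U(g)\updelta_U(f)(v) = gf(a)\cdot t + g'f'(b)\cdot t^2$, and subtracting matches the formula above exactly. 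This same observation also explains why the $u\cdot t^2$ piece is defined to go to zero: it surgically removes the only place where the non-uniqueness of $f'$ on $U$ could spoil the computation.

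For injectivity, if $\updelta_U(f) = 0$, then evaluating at $x_i\cdot t$ for each basis vector $x_i$ of $L$ gives $f(x_i)\cdot t = 0$, hence $f(x_i) = 0$, and so $f = 0$. No auxiliary hypotheses are needed, since the $t$-component of $\updelta_U(f)$ already records $f$ faithfully.

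The only real difficulty is the decomposition bookkeeping in the bracket step; once one recognises that the argument fed into $f'$ or $g'$ never leaves $[L,L]$ — which is a direct consequence of the defining identity (\ref{qder}) for a quasiderivation — the identity $\updelta_U([f,g]) = [\updelta_U(f),\updelta_U(g)]$ reduces cleanly to Lemma \ref{lem2}(2).
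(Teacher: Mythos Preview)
Your proof is correct and follows essentially the same route as the paper's: linearity via Lemma~\ref{lem2}(1), bracket preservation via Lemma~\ref{lem2}(2), and injectivity by reading off the $t$-component. In fact you are slightly more careful than the paper on one point: you explicitly verify that $f'$ and $g'$ preserve $[L,L]$ before re-applying $\updelta_U$, a step the paper uses tacitly when it writes $\updelta_U(f)(g(a);g'(b),0)=(fg(a);f'g'(b),0)$.
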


\begin{proof}
We first show that $\updelta_U$ is linear. Suppose $c\in \mathbb K$ and $f,g\in\qder(\g)$ are arbitrary elements. Note that $\Big(\updelta_U(c\cdot f)-c\cdot \updelta_U(f)\Big)((a;b,u))=(cf(a);cf'(b),0)-c(f(a);f'(b),0)=0$ for all
$(a;b,u)\in\wt{L}$; and moreover, 
\begin{eqnarray*}
\updelta_U(f+g)((a;b,u)) & = &((f+g)(a);(f'+g')(b),0)  \\
 & = & (f(a);f'(b),0)+ (g(a);g'(b),0)\\
 &=& \updelta_U(f)(a;b,u) +\updelta_U(g)(a;b,u)
\end{eqnarray*}
where $g'$ is the linear map associated with $g$ such that $[g(x),y]+[x,g(y)]=g'([x,y])$ for all $x,y\in L$.
Hence, $\updelta_U$ is linear.

Assume that $\updelta_U(f)(a;b,u)=0$ for all $(a;b,u)\in\wt{L}$. Then 
$(f(a);f'(b),0)=0$. In particular, $f(a)=0$ for all $a\in L$, i.e., $f=0$. Thus, $\updelta_U$ is injective.

To see that $\updelta_U$ preserves Lie bracket products, we assume that $f,g\in\qder(L)$ and $(a;b,u)\in\wt{L}$. By Lemma \ref{lem2} above, we see that $[f,g]'=[f',g']$. Moreover, 
\begin{eqnarray*}
\updelta_U([f,g])(a;b,u) & = & ([f,g](a),[f,g]'(b),0) = ([f,g](a),[f',g'](b),0).
\end{eqnarray*}
On the other hands, 
\begin{eqnarray*}
[\updelta_U(f),\updelta_U(g)](a;b,u)& = & \updelta_U(f)(\updelta_U(g)(a;b,u))- \updelta_U(g)(\updelta_U(f)(a;b,u))\\
 & = & \updelta_U(f)(g(a);g'(b),0)-\updelta_U(g)(f(a);f'(b),0)\\
 &=&(fg(a);f'g'(b),0)-(gf(a);g'f'(b),0)\\
 &=& ([f,g](a),[f',g'](b),0).
\end{eqnarray*}
Hence, $\updelta_U([f,g])=[\updelta_U(f),\updelta_U(g)]$ and $\updelta_U$ is a Lie homomorphism. 
\end{proof}

Combining Lemma \ref{lem3.2} and Proposition \ref{prop3.3} obtains

\begin{coro} \label{coro3.4}
The map $\updelta_U$ restricts to a Lie subalgebra embedding of $\qder_c(L)$ into $\der_c(\wt{L})$. 
\end{coro}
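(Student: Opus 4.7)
The plan is to assemble Corollary \ref{coro3.4} as a direct synthesis of Lemma \ref{lem3.2} and Proposition \ref{prop3.3}, so the work reduces to checking two things: (i) the restriction of $\updelta_U$ to the subspace $\qder_c(L)$ lands inside $\der_c(\wt{L})$, and (ii) the restricted map remains an injective Lie homomorphism between these \emph{compatible} Lie subalgebras.

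First I would invoke Proposition \ref{prop2.2}, which guarantees that $\qder_c(L)$ is a Lie subalgebra of $\qder(L)$, so the literal restriction $\updelta_U|_{\qder_c(L)}$ is a well-defined linear map into $\der(\wt{L})$ by Proposition \ref{prop3.3}. Next, I would use the second assertion of Lemma \ref{lem3.2}: if $f\in\qder_c(L)$, then $\updelta_U(f)$ is compatible with respect to $\wt{\upomega}$, hence lies in $\der_c(\wt{L})$. This establishes that the restriction corestricts to the codomain $\der_c(\wt{L})$.

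To finish, I would note that injectivity and the Lie homomorphism identity $\updelta_U([f,g])=[\updelta_U(f),\updelta_U(g)]$ transfer automatically from Proposition \ref{prop3.3} to the restriction, since restricting an injective linear map to a subspace preserves injectivity, and restricting a Lie homomorphism to a Lie subalgebra preserves the bracket identity. A small bookkeeping point is that $\der_c(\wt{L})$ is itself a Lie subalgebra of $\der(\wt{L})$; this is the exact analog for $(\wt{L},\wt{\upomega})$ of the argument used for $\gder_c(L)$ in Proposition \ref{prop2.2}, and it follows immediately from Lemma \ref{keyl} applied to $\wt{L}$.

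There is no substantive obstacle here, as each ingredient has already been verified. The only delicate point worth stating explicitly in the proof is the sanity check that Lemma \ref{keyl} really does apply to the extended form $\wt{\upomega}$, so that closure of $\der_c(\wt{L})$ under sums, scalars, and brackets is guaranteed; this is a one-line observation because Lemma \ref{keyl} was stated for an arbitrary skew-symmetric bilinear form on an arbitrary finite-dimensional space.
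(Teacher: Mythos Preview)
Your proposal is correct and follows exactly the paper's approach: the paper proves Corollary~\ref{coro3.4} in a single line by combining Lemma~\ref{lem3.2} and Proposition~\ref{prop3.3}, and your write-up simply unpacks that combination in more detail. The extra bookkeeping you include (closure of $\der_c(\wt{L})$ via Lemma~\ref{keyl}, restriction preserving injectivity and the bracket identity) is sound and makes explicit what the paper leaves implicit.
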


The rest of this section is devoted to exploring how far the image of $\qder_c(L)$ is from $\der_c(\wt{L})$. 
Let $c(L)$ be the center of $L$, i.e., $c(L)=\{x\in L\mid [x,y]=0,\textrm{ for all }y \in L\}$.
We use $\zder(L)$ to denote the set consisting of all linear maps $f$ on $L$ such that 
$$f(L)\subseteq c(L)\textrm{ and } f([g,g])=0.$$
Clearly, $\zder(L)$ is an ideal of $\der(L)$. Furthermore,

\begin{lem}\label{lem3.5}
Each $f\in\zder(L)$ is compatible.
\end{lem}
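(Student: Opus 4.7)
The plan is to reduce compatibility of $f\in\zder(L)$ to the structural fact that $\upomega(z,y)=0$ for every $z\in c(L)$ and every $y\in L$. Once that vanishing is in hand, compatibility follows immediately: for any $x,y\in L$, both $f(x)$ and $f(y)$ lie in $c(L)$, so
$$\upomega(f(x),y)+\upomega(x,f(y))=\upomega(f(x),y)-\upomega(f(y),x)=0+0=0.$$
Only the condition $f(L)\subseteq c(L)$ is needed for this reduction; the requirement $f([L,L])=0$ does not enter.

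To prove $\upomega(z,y)=0$ for $z\in c(L)$ and $y\in L$, I would apply the $\upomega$-Jacobi identity to the triple $(x,y,z)$ with $x\in L$ arbitrary. Centrality of $z$ forces $[y,z]=0$, $[z,x]=0$, and $[[x,y],z]=0$, so the entire bracket side vanishes and the identity collapses to
$$\upomega(x,y)\,z+\upomega(y,z)\,x+\upomega(z,x)\,y=0.$$
From this relation I would extract $\upomega(y,z)=0$ by a short case analysis: if $y\in\mathbb{K}z$, then $\upomega(y,z)=0$ by skew-symmetry of $\upomega$; otherwise $\{z,y\}$ is linearly independent, and I would pick $x\in L$ extending it to a linearly independent triple $\{z,x,y\}$, forcing each of the three coefficients, in particular $\upomega(y,z)$, to vanish. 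Skew-symmetry then upgrades $\upomega(y,z)=0$ to $\upomega(z,y)=0$.

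The main obstacle is guaranteeing the existence of such an $x$, which can fail in the low-dimensional regime $\dim L\le 2$. In a two-dimensional abelian $L$ equipped with a nonzero $\upomega$, for instance, the $\upomega$-Jacobi identity is vacuous and the linear independence argument offers no information, so that edge case would have to be handled separately --- either by direct verification against the classification in \cite{CLZ14, CZ17}, or by observing that such degenerate algebras are excluded from the paper's intended framework. Outside of that subtlety, the entire proof is a one-line consequence of the $\upomega$-Jacobi identity together with skew-symmetry of $\upomega$.
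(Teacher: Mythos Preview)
Your proof is correct and follows essentially the same approach as the paper's: both reduce compatibility to showing $\upomega(z,\cdot)\equiv 0$ for central $z$ via the $\upomega$-Jacobi identity applied to a linearly independent triple containing $z$, and both flag (you more carefully) that the argument requires $\dim L\geqslant 3$. Your case analysis on whether $y\in\mathbb{K}z$ is a welcome refinement over the paper's terser version, which works only with basis vectors and leaves that edge case implicit.
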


\begin{proof}
We may assume that $\dim(L)\geqslant 3$ and $a,b,c$ are linearly independent vectors in a basis of $L$.  
If $a\in c(L)$, then the $\upomega$-Jacobi identity with $\{a,b,c\}$ shows that $\upomega(a,b)=0$. Note that
$f(L)\subseteq c(L)$, it follows that $\upomega(f(x),y)+\upomega(x,f(y))=\upomega(f(x),y)-\upomega(f(y),x)=0-0=0$, for all $x,y\in L$. Hence, $f$ is compatible.
\end{proof}

\begin{thm}\label{mainthm}
Suppose that $c(L)=0$. Then $\der_c(\wt{L})$ can be decomposed into a semidirect sum of 
the image of $\qder_c(L)$ under the map $\updelta_U$ and $\zder(\wt{L})$.
\end{thm}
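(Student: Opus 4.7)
\medskip

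\noindent\textbf{Proof proposal.} The plan is to show that any compatible derivation $D$ of $\wt L$ splits, in a unique way, as $D=\updelta_U(f)+g$ with $f\in\qder_c(L)$ and $g\in\zder(\wt L)$, and that $\zder(\wt L)$ is an ideal. The structural engine behind everything is the identity $c(\wt L)=L\cdot t^2$, which uses the hypothesis $c(L)=0$.

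First I would work out the bracket invariants of $\wt L$. By construction $[\wt L,\wt L]=[L,L]\cdot t^2\subseteq L\cdot t^2$, and an element $a\cdot t+b\cdot t^2\in\wt L$ is central iff $[a,y]\cdot t^2=0$ for every $y\in L$, i.e.\ iff $a\in c(L)=0$; hence $c(\wt L)=L\cdot t^2$. Next, for any $D\in\der(\wt L)$, I would write
\begin{equation*}
D(a\cdot t)=\alpha(a)\cdot t+\beta(a)\cdot t^2,\qquad D(a\cdot t^2)=\gamma(a)\cdot t+\delta(a)\cdot t^2,
\end{equation*}
for linear maps $\alpha,\beta,\gamma,\delta:L\to L$. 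Applying $D$ to $0=[a\cdot t^2,b\cdot t]$ gives $[\gamma(a),b]\cdot t^2=0$ for all $b\in L$, so $\gamma(a)\in c(L)=0$ and therefore $\gamma=0$. Applying $D$ to $[a\cdot t,b\cdot t]=[a,b]\cdot t^2$ then yields
\begin{equation*}
\delta([a,b])=[\alpha(a),b]+[a,\alpha(b)]\qquad \text{for all }a,b\in L,
\end{equation*}
which says precisely that $\alpha\in\qder(L)$ with associated map $\alpha'=\delta|_{[L,L]}$.

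Set $f:=\alpha$ and $g:=D-\updelta_U(f)$. A direct computation on the three summands $L\cdot t$, $[L,L]\cdot t^2$, $U\cdot t^2$ shows
\begin{equation*}
g(a\cdot t)=\beta(a)\cdot t^2,\quad g(b\cdot t^2)=0\ (b\in[L,L]),\quad g(u\cdot t^2)=\delta(u)\cdot t^2\ (u\in U),
\end{equation*}
so $g(\wt L)\subseteq L\cdot t^2=c(\wt L)$ and $g([\wt L,\wt L])=g([L,L]\cdot t^2)=0$; hence $g\in\zder(\wt L)$. For the compatibility refinement, note that for any $h\in\zder(\wt L)$ and any $x,y\in\wt L$ we have $h(x),h(y)\in L\cdot t^2$, and by the definition of $\wt\upomega$ the form vanishes on $L\cdot t^2\times \wt L$, so $h$ is automatically compatible; in particular $\zder(\wt L)\subseteq\der_c(\wt L)$. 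Hence if $D\in\der_c(\wt L)$, then $\updelta_U(f)=D-g$ is compatible, and Lemma \ref{lem3.2} promotes $f$ to a compatible quasiderivation, i.e.\ $f\in\qder_c(L)$.

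Finally I would verify that the sum is direct and semidirect. Directness: if $\updelta_U(f)\in\zder(\wt L)$, then $f(a)\cdot t=\updelta_U(f)(a\cdot t)\in c(\wt L)=L\cdot t^2$ forces $f(a)=0$ for every $a\in L$, so $f=0$. Semidirectness follows because $\zder(\wt L)$ is an ideal of $\der(\wt L)$ (already noted in the text) and therefore remains an ideal in the intersection $\der_c(\wt L)$. I expect the one spot that needs care is Step~2, making certain that $c(L)=0$ really is the hypothesis that forces $\gamma=0$ and hence $D(L\cdot t^2)\subseteq L\cdot t^2$; without this the entire splitting collapses, because then $D(L\cdot t^2)$ could leak into $L\cdot t$ and $g$ would fail to land in $\zder(\wt L)$.
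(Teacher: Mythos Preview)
Your proof is correct and essentially complete. The paper's own proof is much terser: it imports the non-compatible decomposition $\der(\wt L)=\updelta_U(\qder(L))\oplus\zder(\wt L)$ (both the surjectivity of the splitting and the trivial intersection) as a black box from \cite[Proposition 3.5]{LL00}, and then only argues the compatibility refinement: given $d\in\der_c(\wt L)$, write $d=\updelta_U(f)+h$ with $f\in\qder(L)$ and $h\in\zder(\wt L)$, invoke Lemma~\ref{lem3.5} to see $h$ is compatible, subtract to make $\updelta_U(f)$ compatible, and then use Lemma~\ref{lem3.2} to conclude $f\in\qder_c(L)$.

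By contrast, you unpack \cite[Proposition 3.5]{LL00} entirely: you compute $c(\wt L)=L\cdot t^2$, decompose an arbitrary $D\in\der(\wt L)$ into component maps $(\alpha,\beta,\gamma,\delta)$, kill $\gamma$ via $c(L)=0$, and read off $\alpha\in\qder(L)$ and $D-\updelta_U(\alpha)\in\zder(\wt L)$ by hand. You also replace the paper's general Lemma~\ref{lem3.5} (which uses the $\upomega$-Jacobi identity to show $\upomega(c(L),L)=0$) by the direct observation that $\wt\upomega$ vanishes on $L\cdot t^2\times\wt L$ by construction. Your approach is more self-contained and makes explicit exactly where $c(L)=0$ enters (forcing $\gamma=0$, as you flag), at the cost of reproducing work that \cite{LL00} already did; the paper's approach is shorter but opaque about the mechanism.
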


\begin{proof}
The containment $\updelta_U(\qder_c(L))+\zder(\wt{L})\subseteq \der_c(\wt{L})$ follows immediately from Corollary \ref{coro3.4} and Lemma \ref{lem3.5}.  By \cite[Proposition 3.5]{LL00}, the intersection of
$\updelta_U(\qder_c(L))$ and $\zder(\wt{L})\subseteq \der_c(\wt{L})$ is zero, thus $\updelta_U(\qder_c(L))\oplus \zder(\wt{L})\subseteq \der_c(\wt{L})$.

To see the converse containment,  we note that \cite[Proposition 3.5]{LL00} asserts that any derivation $d$ of $\wt{L}$ can be written as the sum of $\updelta_U(f)$ and $h$, where $f\in \qder(L)$ and $h\in \zder(\wt{L})$. Now we assume that $d$ is compatible.  
Lemma \ref{keyl}, together with Lemma \ref{lem3.5}, implies that
$\updelta_U(f)=d-h$ is also compatible. By Lemma \ref{lem3.2}, we see that $f\in \qder_c(L)$. Hence,
$\der_c(\wt{L})\subseteq\updelta_U(\qder_c(L))\oplus \zder(\wt{L})$, and so
$$\der_c(\wt{L})=\updelta_U(\qder_c(L))\oplus \zder(\wt{L})$$
is a decomposition of vector spaces. Note that $\zder(\wt{L})$ is an ideal of $\der(\wt{L})$, thus this decomposition is actually a semidirect sum decomposition of Lie algebras. 
\end{proof}

\section{Explicit Computations in Dimension 3} \label{sec4}
\setcounter{equation}{0}
\renewcommand{\theequation}
{4.\arabic{equation}}
\setcounter{theorem}{0}
\renewcommand{\thetheorem}
{4.\arabic{theorem}}

\noindent This section provides a procedure to explicitly compute 
all generalized derivations and compatible generalized derivations of a
non-Lie 3-dimensional complex $\upomega$-Lie algebra. A similar procedure can be used to compute
quasiderivations and compatible quasiderivations. 

Our calculations are based on a classification of such $\upomega$-Lie algebras in \cite[Theorem 2]{CLZ14} in which all non-Lie 3-dimensional complex $\upomega$-Lie algebras were classified by two families ($A_\upalpha$ and $C_\upalpha$) and three exceptional  $\upomega$-Lie algebras ($L_1,L_2$, and $B$).

Consider a non-Lie finite-dimensional complex $\upomega$-Lie algebra $L$ with a basis $\{e_1,\dots,e_n\}$. Performing the following steps obtains an explicit description on $\gder(L)$:
\begin{enumerate}
  \item Compute all nonzero generating relations among these $e_i$ and determine the values of $\upomega(e_i,e_j)$ for all $i,j\in\{1,\dots,n\}$;
  \item Consider $\{(f,f_1,f_2)\mid f,f_1,f_2\in M_n(\C)\}$, where $f=(x_{ij}), f_1=(a_{ij}), f_2=(b_{ij})$, and
  $$f(e_i):=\sum_{j=1}^n x_{ji}\cdot e_j, \quad f_1(e_i):=\sum_{j=1}^n a_{ji}\cdot e_j,\quad  f_2(e_i):=\sum_{j=1}^n b_{ji}\cdot e_j.$$
   Define the ground set $V(L):=\{(e_i,e_j)\mid 1\leqslant i,j\leqslant n\}$;
  \item Verify (\ref{gder}) for all $(e_i,e_j)\in V(L)$ and use the linearly independence of $\{e_1,\dots,e_n\}$ to obtain finitely many equations  involving $x_{ij},a_{ij}$ and $b_{ij}$.  Write $\A$ for the set of all such equations;
  \item Solve the system of all equations of $\A$ only involving $x_{ij}$ and make the number of indeterminates as less as possible. 
  \item Choosing some suitable  $x_{ij}$ to eliminate other $x_{ij}$ gives us an explicit description on the generic matrix form of an element $f$ of $\gder(L)$.
\end{enumerate}

\begin{rem}\label{rem4.1}
{\rm
The first part in Example \ref{exam2.6} illustrates the above procedure for the case where $L=L_1, n=3$, $V(L)=\{(x,y),(x,z),(y,z), (y,x),(z,x),(z,y)\}$, and the equations in (\ref{eq2.5}) form the set $\A$. 
\hbo}\end{rem}

\begin{rem}\label{rem4.2}
{\rm
To calculate  $\gder_c(L)$, we need to add an extra condition (\ref{comp})  in Step 3
of the above procedure. In other words, together with the same Steps (1), (2), (4), (5), replacing Step (3) by
\begin{enumerate}
  \item[(3')] Verify (\ref{gder}) and (\ref{comp}) for all $(e_i,e_j)\in V(L)$ and use the linearly independence of $\{e_1,\dots,e_n\}$ to obtain finitely many equations  involving $x_{ij},a_{ij}$ and $b_{ij}$.  Write $\A$ for the set of all such equations;
\end{enumerate}
obtains a procedure to compute $\gder_c(L)$. Part 2 in Example \ref{exam2.6} illustrates this procedure.
\hbo}\end{rem}

We summarize our computations on $\gder(L)$ and $\gder_c(L)$ for a $3$-dimensional non-Lie complex $\upomega$-Lie algebra $L$ as in the following table: 

\begin{center}
\begin{longtable}{c|c|c|c|c}
$L$ & Elements in $\gder(L)$ & $\dim(\gder(L))$ & Elements in $\gder_c(L)$ & $\dim(\gder_c(L))$\\
\hline
$L_1$ & $\begin{pmatrix}
     x_{11} & x_{12} &x_{13}   \\
     0 & x_{22} &0  \\
     x_{31} & x_{32} &x_{33}   \\
\end{pmatrix}$ & 7 & $\begin{pmatrix}
     x_{11} & x_{12} &0   \\
     0 & -x_{11} &0   \\
     x_{31} & x_{32} &x_{33}   \\
\end{pmatrix}$ & 5 \\
\hline
$L_2$ & $\begin{pmatrix}
     x_{11} & x_{12} &x_{13}   \\
     x_{21}& x_{22} &x_{23}   \\
     0 & 0 &x_{33}   \\
\end{pmatrix}$ & 7 & $\begin{pmatrix}
     x_{11} &0 &x_{13}   \\
     x_{21} & x_{22} &x_{23}   \\
     0 & 0 &-x_{11}   \\
\end{pmatrix}$ & 5 \\
\hline
$B$ & $\begin{pmatrix}
     x_{11} & x_{12} &x_{13}   \\
     x_{21}& x_{22} &x_{23}   \\
     x_{31} & x_{32} &x_{33}   \\
\end{pmatrix}$ & 9 & $\begin{pmatrix}
     x_{11} &x_{12} &x_{13}   \\
     0 & x_{22} &x_{23}   \\
     0 & x_{32} &-x_{22}   \\
\end{pmatrix}$ & 6 \\
\hline
$A_{\upalpha}$ & $\begin{pmatrix}
     x_{11} & x_{12} &x_{13}   \\
     x_{21}& x_{22} &x_{23}   \\
     x_{31} & x_{32} &x_{33}   \\
\end{pmatrix}$ & 9 & $\begin{pmatrix}
     x_{11} &x_{12} &x_{13}   \\
     0 & x_{22} &x_{23}   \\
     0 & x_{32} &-x_{22}   \\
\end{pmatrix}$ & 6 \\
\hline
$C_{\upalpha}$ & $\begin{pmatrix}
     x_{11} & x_{12} &x_{13}   \\
     x_{21}& x_{22} &x_{23}   \\
     x_{31} & x_{32} &x_{33}   \\
\end{pmatrix}$ & 9 & $\begin{pmatrix}
     x_{11} &x_{12} &x_{13}   \\
     0 & x_{22} &x_{23}   \\
     0 & x_{32} &-x_{22}   \\
\end{pmatrix}$ & 6 \\
\captionsetup{skip=10pt}
\caption{$\gder(L)$ and $\gder_c(L)$ in Dimension 3} \label{tab1}
\end{longtable}
\end{center}

\vspace{-0.6cm}

We obtain the following consequences directly from Table \ref{tab1} above.

\begin{coro}
Let $L$ be a non-Lie $3$-dimensional complex $\upomega$-Lie algebra. Then
$\gder(L)=\gl(L)$ if and only if $L\notin\{L_1,L_2\}$.
\end{coro}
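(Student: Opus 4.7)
The proof I have in mind is essentially an inspection of Table \ref{tab1} combined with the classification of non-Lie $3$-dimensional complex $\upomega$-Lie algebras from \cite[Theorem 2]{CLZ14}. That classification asserts that any such $L$ is isomorphic to one of $L_1, L_2, B, A_\upalpha$ (for some $\upalpha\in\C$), or $C_\upalpha$. Since $\dim(L)=3$, we always have $\dim(\gl(L))=9$, so to decide whether $\gder(L)=\gl(L)$ it suffices, by the inclusion $\gder(L)\subseteq \gl(L)$, to compare $\dim(\gder(L))$ with $9$.

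For the ``only if'' direction I would simply read off Table \ref{tab1}: both $\dim(\gder(L_1))$ and $\dim(\gder(L_2))$ equal $7$, so in those two cases $\gder(L)\subsetneq \gl(L)$. This already appeared for $L_1$ in Example \ref{exam2.6}, where the equations $x_{21}=x_{23}=0$ in $\A$ obstruct realizing a generic $3\times 3$ matrix as a generalized derivation; for $L_2$ an entirely parallel calculation produces the constraints $x_{31}=x_{32}=0$ displayed in the second row of Table \ref{tab1}. Contrapositively, $\gder(L)=\gl(L)$ forces $L\notin\{L_1,L_2\}$.

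For the ``if'' direction I would appeal to the rightmost columns of Table \ref{tab1}: for $L\in\{B,A_\upalpha,C_\upalpha\}$ the generic element of $\gder(L)$ is a completely unconstrained $3\times 3$ matrix, so $\dim(\gder(L))=9=\dim(\gl(L))$ and equality of subspaces follows. To make this rigorous one has to actually run the five-step procedure of Section \ref{sec4} for each of $B$, $A_\upalpha$, and $C_\upalpha$, expand (\ref{gder}) against every pair $(e_i,e_j)\in V(L)$, and check that the subsystem of $\A$ involving only the $x_{ij}$ is empty—equivalently, that for every $f\in\gl(L)$ one can solve the remaining equations for suitable auxiliary maps $f_1,f_2$.

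The main obstacle is bookkeeping rather than conceptual: for $A_\upalpha$ and $C_\upalpha$ the computation depends on the parameter $\upalpha$, and one must confirm that the conclusion is genuinely uniform in $\upalpha$ (and check any exceptional values separately). Once those verifications are carried out—exactly in the style of Example \ref{exam2.6}, but with Step (3) rather than Step (3$'$) of Remarks \ref{rem4.1}--\ref{rem4.2}—the corollary reduces to a one-line inspection of the dimension column of Table \ref{tab1}.
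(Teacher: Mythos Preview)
Your proposal is correct and matches the paper's approach exactly: the paper simply states that the corollary follows ``directly from Table \ref{tab1} above,'' and your argument is precisely an expanded account of that inspection, comparing $\dim(\gder(L))$ against $\dim(\gl(L))=9$ case by case using the classification from \cite[Theorem 2]{CLZ14}.
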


\begin{coro}
Let $L$ be a non-Lie $3$-dimensional complex $\upomega$-Lie algebra. Then
$$\gder(L)\neq\gder_c(L).$$
\end{coro}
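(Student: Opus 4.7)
The plan is to read this off directly from Table \ref{tab1}. By the classification in \cite[Theorem 2]{CLZ14}, every non-Lie $3$-dimensional complex $\upomega$-Lie algebra $L$ is isomorphic to one of the five algebras $L_1, L_2, B, A_\upalpha, C_\upalpha$, so it suffices to handle these five cases. For each case Table \ref{tab1} records both the generic matrix form of an element of $\gder(L)$ (obtained by the procedure summarized in Remark \ref{rem4.1}) and the generic matrix form of an element of $\gder_c(L)$ (obtained by the procedure summarized in Remark \ref{rem4.2}), together with their dimensions as $\C$-vector spaces.

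Inspecting the two dimension columns, we find $\dim\gder(L_1)=\dim\gder(L_2)=7$ while $\dim\gder_c(L_1)=\dim\gder_c(L_2)=5$, and $\dim\gder(B)=\dim\gder(A_\upalpha)=\dim\gder(C_\upalpha)=9$ while $\dim\gder_c(B)=\dim\gder_c(A_\upalpha)=\dim\gder_c(C_\upalpha)=6$. In every one of the five cases the strict inequality $\dim\gder_c(L)<\dim\gder(L)$ holds, and since $\gder_c(L)\subseteq\gder(L)$ by the tower \eqref{tower}, this forces $\gder_c(L)\subsetneq\gder(L)$, proving the corollary.

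No genuine obstacle arises, since the substantive work of computing the two towers has already been done in Section \ref{sec4} and compiled in Table \ref{tab1}; the corollary is a direct dimension comparison. The only subtlety worth flagging (should the referee want it made explicit) is that the $\upalpha$ in the families $A_\upalpha$ and $C_\upalpha$ does not affect the dimensions, so the single row in the table really does cover the entire family uniformly — this is visible from the fact that the equations in the set $\A$ that only involve the $x_{ij}$ turn out to be independent of $\upalpha$.
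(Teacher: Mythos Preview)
Your proof is correct and follows exactly the paper's own approach: the corollary is stated as a direct consequence of Table~\ref{tab1}, and you have simply spelled out the dimension comparison that the paper leaves implicit. Your added remark about the parameter $\upalpha$ not affecting the dimensions is a helpful clarification but not strictly needed.
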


Let's continue to consider $L$ as a non-Lie finite-dimensional complex $\upomega$-Lie algebra with a basis $\{e_1,\dots,e_n\}$. Performing the following steps may obtain an explicit description on $\qder(L)$:
\begin{enumerate}
  \item Compute all nonzero generating relations among these $e_i$ and determine the values of $\upomega(e_i,e_j)$ for all $i,j\in\{1,\dots,n\}$;
  \item Consider $\{(f,f')\mid f,f'\in M_n(\C)\}$, where $f=(x_{ij}), f'=(a_{ij})$, and
  $$f(e_i):=\sum_{j=1}^n x_{ji}\cdot e_j, \quad f'(e_i):=\sum_{j=1}^n a_{ji}\cdot e_j.$$
   Define the ground set $W(L):=\{(e_i,e_j)\mid 1\leqslant i<j\leqslant n\}$;
  \item Verify (\ref{qder}) for all $(e_i,e_j)\in W(L)$ and use the linearly independence of $\{e_1,\dots,e_n\}$ to obtain finitely many equations  involving $x_{ij},a_{ij}$ and $b_{ij}$.  Write $\B$ for the set of all such equations;
  \item Solve the system of all equations of $\B$ only involving $x_{ij}$ and make the number of indeterminates as less as possible. 
  \item Choosing some suitable  $x_{ij}$ to eliminate other $x_{ij}$ gives us an explicit description on the generic matrix form of an element $f$ of $\qder(L)$.
\end{enumerate}
Replacing Step (3) above by
\begin{enumerate}
  \item[(3'')] Verify (\ref{qder}) and (\ref{comp}) for all $(e_i,e_j)\in W(L)$ and use the linearly independence of $\{e_1,\dots,e_n\}$ to obtain finitely many equations  involving $x_{ij},a_{ij}$ and $b_{ij}$.  Write $\B$ for the set of all such equations;
\end{enumerate}
may be used to compute $\qder_c(L)$.

\begin{exam}{\rm
Let us recall the generating relations in $L_2$ appeared in \cite[Theorem 2]{CLZ14}:
$$[x,y]=0,[x,z]=y,[y,z]=z\textrm{ and }\upomega(x,z)=1, \upomega(x,y)=\upomega(y,z)=0.$$
Suppose that $$f=\begin{pmatrix}
     x_{11} & x_{12} &x_{13}   \\
     x_{21} & x_{22} &x_{23}   \\
     x_{31} & x_{32} &x_{33}   \\
\end{pmatrix} \textrm{and } f'=\begin{pmatrix}
     a_{11} & a_{12} &a_{13}   \\
     a_{21} & a_{22} &a_{23}   \\
     a_{31} & a_{32} &a_{33}   \\
\end{pmatrix}.$$
Note that $W(L_2)=\{(x,y),(x,z),(y,z)\}$, thus substituting these three elements into  (\ref{qder}) obtains 
$$
\B=\{x_{31}=x_{32}=x_{12} - a_{23}=x_{21} - a_{32}=x_{11} + x_{33} - a_{22}=x_{22} + x_{33} - a_{33}=a_{12}=a_{13}=0\}.$$
Hence, a quasiderivation $f$ of $L_2$ is of the following form
$$f=\begin{pmatrix}
     x_{11} & x_{12} &x_{13}   \\
     x_{21} & x_{22} &x_{23}   \\
     0 & 0 &x_{33}   \\
\end{pmatrix},$$
together with the associated linear map $f'$ of the following form
$$f'=\begin{pmatrix}
     a_{11} & 0 &0  \\
     a_{21} & x_{11}+x_{33} &x_{12}   \\
     a_{31} & x_{21} &x_{22}+x_{33}   \\
\end{pmatrix}.$$
Thus, $\qder(L_2)$ is a $7$-dimensional vector space. Note that the dimension of $\gder(L_2)$ is $7$ in Table \ref{tab1} and $\qder(L_2)\subseteq \gder(L_2)$. Hence, $\qder(L_2)=\gder(L_2)$.
\hbo}\end{exam}

The following Table \ref{tab2} summaries the generic forms of all  elements $f\in\qder(L)$ and their associated maps $f'$.

\begin{center}
\begin{longtable}{c|c|c}
$L$ & $f\in\qder(L)$ & Associated $f'$\\
\hline
$L_1$ & $\begin{pmatrix}
     x_{11} & x_{12} &x_{13}   \\
     0 & x_{22} &0  \\
     x_{31} & x_{32} &x_{33}   \\
\end{pmatrix}$ &  $\begin{pmatrix}
  a_{11}    & 0 & 0 \\
   a_{21}   & x_{11} + x_{22} &-x_{13} \\
   a_{31}   &-x_{31} &x_{22} + x_{33} 
\end{pmatrix}$\\
\hline
$L_2$ & $\begin{pmatrix}
     x_{11} & x_{12} &x_{13}   \\
     x_{21} & x_{22} &x_{23}   \\
     0 & 0 &x_{33}   \\
\end{pmatrix}$ &  $\begin{pmatrix}
     a_{11} & 0 &0  \\
     a_{21} & x_{11}+x_{33} &x_{12}   \\
     a_{31} & x_{21} &x_{22}+x_{33}   \\
\end{pmatrix}$\\
\hline
$B$ & $\begin{pmatrix}
     x_{11} & x_{12} &x_{13}   \\
     x_{21} & x_{22} &x_{23}   \\
     x_{31} & x_{32} &x_{33}   \\
\end{pmatrix}$  &$\begin{pmatrix}
     x_{22}+x_{33} &-x_{31} &x_{21}+x_{31}   \\
     x_{12}-x_{13} & x_{11} + x_{22} + x_{32} &x_{23}+x_{33}-x_{22}-x_{32}   \\
     x_{12} & x_{32} &x_{11}+x_{33}-x_{32}   \\
\end{pmatrix}$\\
\hline
$A_{\upalpha}$ & $\begin{pmatrix}
     x_{11} & x_{12} &x_{13}   \\
     x_{21}& x_{22} &x_{23}   \\
     x_{31} & x_{32} &x_{33}  \\
\end{pmatrix}$  & {\small $\begin{array}{c}\begin{pmatrix}
    x_{11} + x_{22} + x_{32}- \upalpha x_{31} &a_{12}&a_{13}   \\
     x_{32} & x_{11}+x_{32}+x_{33} &x_{12}+ \upalpha x_{32}  \\
     -x_{31} & x_{21}+x_{31} &\upalpha x_{31}+x_{22} + x_{33}   \\
\end{pmatrix}\\
\textrm{where }a_{12}=\upalpha x_{21}-x_{22}+x_{23}+\upalpha x_{31}-x_{32}+x_{33}\\
a_{13}= \upalpha (x_{12}-\upalpha x_{31}+x_{32}+x_{33}) + x_{12} - x_{13}
\end{array}$}\\
\hline
$C_{\upalpha}$& $\begin{pmatrix}
     x_{11} & x_{12} &x_{13}   \\
     x_{21}& x_{22} &x_{23}   \\
     x_{31} & x_{32} &x_{33}   \\
\end{pmatrix}$ & $\begin{pmatrix}
     x_{22}+x_{33} &-x_{31} &\upalpha^{-1} x_{21}   \\
     -x_{13} &x_{11}+x_{22} &\upalpha^{-1} x_{23}   \\
     \upalpha x_{12} & \upalpha x_{32} &\upalpha(x_{11}+x_{33})   \\
\end{pmatrix}$\\
\captionsetup{skip=10pt}
\caption{$\qder(L)$ in Dimension 3} \label{tab2}
\end{longtable}
\end{center}

\vspace{-0.6cm}

Combining Tables \ref{tab1} and \ref{tab2}, we obtain

\begin{coro}
Let $L$ be a non-Lie $3$-dimensional complex $\upomega$-Lie algebra. Then
$\qder(L)=\gder(L)$ and $\qder_c(L)=\gder_c(L)$.
\end{coro}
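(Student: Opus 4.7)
The plan is to verify the equality $\qder(L)=\gder(L)$ by a case-by-case inspection of the five non-Lie $3$-dimensional complex $\upomega$-Lie algebras ($L_1$, $L_2$, $B$, $A_\upalpha$, $C_\upalpha$) from \cite[Theorem~2]{CLZ14}, and then deduce the compatible version $\qder_c(L)=\gder_c(L)$ as a formal consequence. The containment $\qder(L)\subseteq\gder(L)$ is automatic from the definitions (take $f_1=f$ and $f_2=f'$ in (\ref{gder})), so it is enough to establish the reverse inclusion.

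First, I would apply the five-step procedure outlined at the start of Section~\ref{sec4} (with $W(L)$ replacing $V(L)$) to each of the five algebras: substitute the bracket relations into (\ref{qder}), extract from the linear independence of the basis a system $\B$ of linear equations among $\{x_{ij},a_{ij}\}$, and solve the subsystem in the $x_{ij}$ alone to obtain the generic matrix form of an element of $\qder(L)$. The worked example for $L_2$ presented just before serves as a template, and the five resulting generic forms are exactly what Table~\ref{tab2} records. A direct comparison with the corresponding column of Table~\ref{tab1} shows that the generic forms, and hence the dimensions, of $\qder(L)$ and $\gder(L)$ coincide in every case; combined with the trivial containment this yields $\qder(L)=\gder(L)$.

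For the compatible statement, I would simply intersect both sides of $\qder(L)=\gder(L)$ with the subspace of compatible elements of $\gl(L)$, which is a well-defined subspace by Lemma~\ref{keyl}. This yields $\qder_c(L)=\gder_c(L)$ at once, with no further case analysis.

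The main obstacle is the bookkeeping for the two one-parameter families $A_\upalpha$ and $C_\upalpha$: here the associated map $f'$ depends nontrivially on $\upalpha$ and couples several entries of $f$ simultaneously, so verifying that for \emph{every} prescribed matrix $f\in\gl(L)$ one can solve the system $\B$ for a consistent $f'$ requires care. The somewhat intricate entries of $f'$ in the $A_\upalpha$ row of Table~\ref{tab2} are precisely the witnesses to that consistency, and an explicit symbolic computation (conveniently carried out in a computer algebra system) removes any residual doubt.
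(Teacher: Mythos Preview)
Your proposal is correct and follows essentially the same approach as the paper, which proves the corollary by the one-line remark ``Combining Tables~\ref{tab1} and~\ref{tab2}.'' Your explicit intersection argument for the compatible equality $\qder_c(L)=\gder_c(L)$ is a clean way to make precise what the paper leaves implicit, since Table~\ref{tab2} does not separately record $\qder_c(L)$.
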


\vspace{2mm}
\noindent \textbf{Acknowledgements}. 
This research was partially supported by the University of Saskatchewan under grant No. APEF-121159. 
The third author would like to thank her Ph.D supervisor Professor Dancheng Lu for his encouragement and help.
The authors  thank the anonymous referee for their careful reading and constructive comments. 

\begin{bibdiv}
  \begin{biblist}

\bib{BPS24}{article}{
   author={Basdouri, Imed},
   author={Peyghan, Esmaeil},
   author={Sadraoui, Mohamed A.},
   title={Twisted Lie algebras by invertible derivations},
   journal={Asian-Eur. J. Math.},
   volume={17},
   date={2024},
   number={6},
   pages={Paper No. 2450041, 24 pp},
}

  \bib{CCZ21}{article}{
   author={Chang, Hongliang},
   author={Chen, Yin},
   author={Zhang, Runxuan},
   title={A generalization on derivations of Lie algebras},
   journal={Electron. Res. Arch.},
   volume={29},
   date={2021},
   number={3},
   pages={2457--2473},
}

\bib{CNY23a}{article}{
   author={Chen, Zhiqi},
   author={Ni, Junna},
   author={Yu, Jianhua},
   title={Description of $\upomega$-Lie algebras},
   journal={J. Geom. Phys.},
   volume={192},
   date={2023},
   pages={Paper No. 104926, 13 pp},
}

\bib{CNY23b}{article}{
   author={Chen, Zhiqi},
   author={Ni, Junna},
   author={Yu, Jianhua},
   title={The $\upomega$-Lie algebra defined by the commutator of an $\upomega$-left-symmetric algebra is not perfect},
   date={2023},
   pages={\texttt{arXiv:2301.12953}.},
}

\bib{CW23}{article}{
   author={Chen, Zhiqi},
   author={Wu, Yang},
   title={The classification of $\upomega$-left-symmetric algebras in low
   dimensions},
   journal={Bull. Korean Math. Soc.},
   volume={60},
   date={2023},
   number={3},
   pages={747--762},
}

  \bib{Che25}{article}{
   author={Chen, Yin},
   title={Some Lie algebra structures on symmetric powers},
   journal={Amer. Math. Monthly},
  volume={132},
   date={2025},
  number={3},
   pages={150--161}
}

\bib{CLZ14}{article}{
   author={Chen, Yin},
   author={Liu, Chang},
   author={Zhang, Runxuan},
   title={Classification of three-dimensional complex $\upomega$-Lie algebras},
   journal={Port. Math.},
   volume={71},
   date={2014},
   number={2},
   pages={97--108},
}

  \bib{CRSZ24}{article}{
   author={Chen, Yin},
   author={Ren, Shan},
   author={Shan, Jiawen},
   author={Zhang, Runxuan},
   title={Derivative maps of finite-dimensional Lie algebras},
   journal={Submitted for publication},
   date={2024},
}

\bib{CZZZ18}{article}{
   author={Chen, Yin},
   author={Zhang, Ziping},
   author={Zhang, Runxuan},
   author={Zhuang, Rushu},
   title={Derivations, automorphisms, and representations of complex
   $\upomega$-Lie algebras},
   journal={Comm. Algebra},
   volume={46},
   date={2018},
   number={2},
   pages={708--726},
}

\bib{CZ17}{article}{
   author={Chen, Yin},
   author={Zhang, Runxuan},
   title={Simple $\upomega$-Lie algebras and $4$-dimensional $\upomega$-Lie
   algebras over $\Bbb{C}$},
   journal={Bull. Malays. Math. Sci. Soc.},
   volume={40},
   date={2017},
   number={3},
   pages={1377--1390},
}

\bib{CZ23}{article}{
   author={Chen, Yin},
   author={Zhang, Runxuan},
   title={A commutative algebra approach to multiplicative Hom-Lie algebras},
   journal={Linear Multilinear Algebra},
   volume={71},
   date={2023},
   number={7},
   pages={1127--1144},
}

  \bib{CZ24a}{article}{
   author={Chen, Yin},
   author={Zhang, Runxuan},
   title={Cohomology of left-symmetric color algebras},
   date={2024},
   pages={\texttt{arXiv:2408.04033}},
}

  \bib{CZ24b}{article}{
   author={Chen, Yin},
   author={Zhang, Runxuan},
   title={Deformations of left-symmetric color algebras},
   date={2024},
   pages={\texttt{arXiv:2411.10370}},
}

\bib{LL00}{article}{
   author={Leger, George F.},
   author={Luks, Eugene M.},
   title={Generalized derivations of Lie algebras},
   journal={J. Algebra},
   volume={228},
   date={2000},
   number={1},
   pages={165--203},
}

\bib{Nur07}{article}{
   author={Nurowski, Pawel},
   title={Deforming a Lie algebra by means of a 2-form},
   journal={J. Geom. Phys.},
   volume={57},
   date={2007},
   number={5},
   pages={1325--1329},
}

\bib{Oub24}{article}{
   author={Oubba, Hassan},
   title={Local (2-Local) derivations and automorphisms and biderivations of complex $\upomega$-Lie algebras},
   journal={Matematiche (Catania)},
   volume={79},
    number={1},
   date={2024},
   pages={135--150},
}

\bib{RZ24}{article}{
 author={Ren, Shan},
   author={Zhang, Runxuan},
   title={Skew-symmetric solutions of the classical Yang-Baxter equation and $\mathcal{O}$-operators of Malcev algebras},
   journal={Filomat},
   volume={38},
   date={2024},
   number={14},
   pages={5003--5019},
}

\bib{Vla22}{article}{
   author={Vladeva, Dimitrinka},
   title={Endomorphisms of upper triangular matrix semirings},
   journal={Comm. Algebra},
   volume={50},
   date={2022},
   number={2},
   pages={822--835},
}

\bib{Vla24}{article}{
   author={Vladeva, Dimitrinka},
   title={Endomorphisms of upper triangular matrix rings},
   journal={Beitr. Algebra Geom.},
   volume={65},
   date={2024},
   number={2},
   pages={291--306},
}

\bib{Zha21}{article}{
   author={Zhang, Runxuan},
   title={Representations of $\upomega$-Lie algebras and tailed derivations of
   Lie algebras},
   journal={Internat. J. Algebra Comput.},
   volume={31},
   date={2021},
   number={2},
   pages={325--339},
}

\bib{ZZ10}{article}{
   author={Zhang, Runxuan},
   author={Zhang, Yongzheng},
   title={Generalized derivations of Lie superalgebras},
   journal={Comm. Algebra},
   volume={38},
   date={2010},
   number={10},
   pages={3737--3751},
}

\bib{ZC21}{article}{
   author={Zhou, Jia},
   author={Chen, Liangyun},
   title={On low-dimensional complex $\upomega$-Lie superalgebras},
   journal={Adv. Appl. Clifford Algebr.},
   volume={31},
   date={2021},
   number={3},
   pages={Paper No. 54, 30 pp},
}

\bib{ZCM18}{article}{
   author={Zhou, Jia},
   author={Chen, Liangyun},
   author={Ma, Yao},
   title={Generalized derivations of Hom-Lie triple systems},
   journal={Bull. Malays. Math. Sci. Soc.},
   volume={41},
   date={2018},
   number={2},
   pages={637--656},
}

\bib{ZCMS18}{article}{
   author={Zhou, Jia},
   author={Chen, Liangyun},
   author={Ma, Yao},
   author={Sun, Bing},
   title={On $\upomega$-Lie superalgebras},
   journal={J. Algebra Appl.},
   volume={17},
   date={2018},
   number={11},
   pages={1850212, 17 pp},
}

  \end{biblist}
\end{bibdiv}
\raggedright
\end{document}